\definecolor{darkblue}{rgb}{0,0,0.7} 
\definecolor{darkred}{rgb}{0.9,0.1,0.1}
\definecolor{darkgreen}{rgb}{0,0.5,0}
\newtheorem{thm}{Theorem}[section]
\newtheorem{lem}[thm]{Lemma}
\newtheorem{cor}[thm]{Corollary}
\theoremstyle{remark}
\newtheorem{rem}[thm]{Remark}
\theoremstyle{definition}
\renewcommand{\leq}{\leqslant}
\renewcommand{\geq}{\geqslant}
\renewcommand{\subset}{\subseteq}
\newcommand{\B}{\mathbb{B}}
\newcommand{\ah}{A_{\mathsf{h}}}
\newcommand{\uh}{u_{\mathsf{h}}}
\newcommand{\F}{\mathcal{F}}
\newcommand{\N}{\mathbb{N}}
\newcommand{\1}{\mathbf{1}}
\newcommand{\R}{\mathbb{R}}
\newcommand{\C}{\mathbb{C}}
\newcommand{\Q}{\mathbb{Q}}
\newcommand{\Z}{\mathbb{Z}}
\renewcommand{\P}{\mathbb{P}}
\newcommand{\eps}{\varepsilon}
\newcommand{\Rd}{\R^d}
\newcommand{\I}{\mathcal{I}}
\title[]{On the homogenization of random stationary elliptic operators in divergence form}
\author{Arianna Giunti, Juan J. L. Vel\'azquez}
\begin{document}

\maketitle

\begin{abstract}
In this note we comment on the homogenization of a random elliptic operator in divergence form $-\nabla \cdot a\nabla$, where the coefficient field $a$ 
is distributed according to a stationary, but not necessarily ergodic, probability measure $\P$. We generalize the well-known case for $\P$ stationary and ergodic 
by showing that the operator $-\nabla \cdot a(\frac{\cdot}{\eps})\nabla$ almost surely homogenizes to a constant-coefficient, random operator $-\nabla \cdot A_h\nabla$.
Furthermore, we use a disintegration formula for $\P$ with respect to a family of ergodic and stationary probability measures to show that the law of $A_h$ may be obtained 
by using the standard homogenization results on each probability measure of the previous family. We finally provide a more explicit formula for $A_h$ in the
case of coefficient fields which are a function of a stationary Gaussian field.

\end{abstract}


\section{Introduction}
This note provides a remark on the homogenization of random elliptic operators in divergence form $-\nabla \cdot a \nabla$, where the
coefficient field $a : \Rd \to \R^{d \times d}$ is symmetric, uniformly elliptic and distributed according to a probability measure $\P$ which is invariant with respect
to the translations $a(\cdot + x)$, $x\in \Rd$. We do not assume that $\P$ is ergodic.

\bigskip

In the case of stationary and ergodic measures, it is well-established that for $\P$-almost every realization of the coefficient field $a$, for $\eps \downarrow 0^+$ the
rescaled operator $-\nabla \cdot a(\frac{\cdot}{\eps}) \nabla$ homogenizes to $-\nabla \cdot A_{\mathrm{h}} \nabla$. The homogenized coefficient $A_{\mathrm{h}}$ is constant, 
deterministic and satisfies the same ellipticity bounds. Qualitative stochastic homogenization, {namely the convergence of solutions $u_\eps$ associated to 
$-\nabla \cdot a(\frac{\cdot}{\eps}) \nabla$ to the solution $\uh$ associated to $-\nabla \cdot A_{\mathrm{h}} \nabla$}, has been obtained in \cite{Koslov79} and
\cite{PapVar}; in the last two decades, a large literature has been developed to upgrade these results into quantitative estimates on the convergence of $u_\eps$
to $u_h$ (e.g. \cite{ArmstrongKuusiMourrat, ArmstrongMourrat, ArmstrongSmart, BellaGiuntiOttoPCMI, GNO4, GNO}). This has led, in addition, to an exhaustive understanding of the fluctuations structure of $u_\eps$ and of other meaningful
quantities related to the random operator $-\nabla \cdot a \nabla$ \cite{ArmstrongKuusiMourrat, DuerinckxGloriaOtto, DuerinckxGloriaOtto2,  gumourrat-fluctuations}. 

\bigskip

As is well-known in classical stochastic homogenization \cite{Koslov79, PapVar}, namely when the measure $\P$ is stationary and ergodic, the homogenized
matrix $\ah$ may be identified with the large-scale limit of the spacial {averages of suitable stationary random fields, namely the flux of the correctors $a (e_i + \nabla \phi_i)$  (see \eqref{corrector.eq} and \eqref{ergodic.homogenized.coefficient}). We refer to \cite{BellaFehrmanFischerOtto,  BellaGiuntiOtto2nd, FischerOtto, Gu_highorder, MourratOtto} for an extensive study of the correctors and their properties.
This characterization of $\ah$ allows to appeal to Birkhoff's ergodic theorem (see e.g. \cite{kre}) and infer that $\ah$ is well-defined for $\P$-almost every realization
of $a$ and, by the ergodicity assumption, that $\ah$ does not depend on $a$. It is thus intuitive to expect that, if only the ergodicity assumption on $\P$ fails, one still
obtains a homogenization result for $-\nabla \cdot a(\frac{\cdot}{\eps}) \nabla$, this time with the homogenized coefficient $A_{\textrm{h}}= \ah(a)$ being a
random matrix. The first result contained in this note gives a rigorous derivation of this argument (see Theorem \ref{t.main}).

\smallskip

If we denote by $\Omega$ the space of realizations of $a$ and by $(\Omega, \F, \P)$ the associated probability space, Birkhoff's ergodic theorem also implies 
that the random matrix $\ah$ is measurable with respect to the $\sigma$-algebra $\I \subset \F$ generated by the subsets of $\Omega$ which are translation invariant. 
In other words, in the case of stationary measures, the homogenization process does not remove the randomness from $\ah$ but leads nonetheless to a 
reduction in its complexity. We give a further result in this direction by relying on some techniques coming from statistics and dynamical systems that allow to write $\P$ as a disintegration with respect to a family of stationary and ergodic probability
measures. More precisely, if $\P$ is a stationary probability measure on $(\Omega, \F)$, then for all $B \in \F$
\begin{align}\label{decomposition.P}
\P(B) = \int_{\Omega_0} \P_{\xi}(B) \tilde\P(d\xi), 
\end{align}
where $(\Omega_0, \I_0)$ is a measurable space, $\{ P_\xi\}_{\xi \in \Omega_0}$ is a family of ergodic and stationary probability measures on $(\Omega, \F)$, and 
$\tilde \P$ is a probability measure on $(\Omega_0, \I)$ (see \cite{blum1972, Gray.Probability2001} and Lemma \ref{Ergodic.disintegration}). More precisely, the set
$\Omega_0$ is obtained as a quotient $\Omega/ \sim$ with respect to a suitable equivalence relation $\sim$, and $\I_0$ is isomorphic to the $\sigma$-algebra $\I$ of the sets of $\Omega$
invariant under translations. Using \eqref{decomposition.P}, we show that $A_{\textrm{h}}$ may be identified with a random variable on $(\Omega_0, \I_0, \tilde\P)$ and
that we may simply obtain the realization $\ah(\xi)$ for $\xi \in \Omega_0$ by appealing to the standard homogenization result for 
the ergodic and stationary measure $(\Omega, \F, \P_\xi)$ (Corollary \ref{cor.abstract}). 

\smallskip

As an application of the previous result, we study the case of coefficient fields $a$ which are Gaussian related, namely when $a$ is a function of a 
stationary Gaussian field. The Gaussian setting allows to obtain a more explicit disintegration of $\P$ and an explicit formula for $\Omega_0$ and $\tilde \P$ which 
characterize the law of the random matrix $A_{\textrm{h}}$ (Corollary \ref{c.gaussian}). 

\bigskip

We conclude this introduction by mentioning that in \cite{DalMaso.nonergodic} a homogenization result for random \textit{free-discontinuity functionals} has
also been obtained in the setting of stationary measures which are not assumed to be ergodic. 

\section{Notation and abstract result}
Let $(\Omega, \F, \P)$ be a probability space equipped with a group of transformations $\{ \tau_x : \Omega \rightarrow \Omega \}_{x\in \Rd}$, $d \geq 2$, with respect to 
which the measure $\P$ is stationary, i.e. 
\begin{align}\label{stationarity}
\P \circ \tau_x = \P \ \ \ \forall x\in \Rd.
\end{align}
We assume that $\F$ is countably generated and that for all $B \in \F$ 
\begin{align}\label{stochastic.continuity}
\lim_{|x| \downarrow 0} \int | 1_B( \tau_x \omega) - 1_B(\omega)| \P(d\omega)= 0.
\end{align}
Form this it follows that the joint map $\tau: \Omega \times \Rd \to \Omega$, $\tau(x, \omega) = \tau_x \omega$ is measurable with respect to
the tensor $\sigma$-algebra of  $\F$ and of the Lebesgue measurable sets of $\Rd$. 

\smallskip

In addition, the assumptions on $\F$ also imply that the spaces $L^p(\Omega, \F, \P)$ are separable for all $1\leq p< +\infty$ and  that { the maps
$T_x : L^p(\Omega, \F, \P) \to L^p(\Omega, \F, \P)$, \ $T_x F:= F \circ \tau_x$ are strongly continuous for all $x\in \Rd$ and $1 \leq p < +\infty$.} 
For $F \in L^1(\Omega, \F, \P)$, we define
\begin{align}
 \langle \, F  \, \rangle := \int_\Omega F(\omega)  \, \P(d\omega).
\end{align}

\bigskip

Let $\mathcal{M}_{d, \textrm{sym}}$ denote the space of symmetric $d \times d$ real matrices and let $(\Omega, \F, \P)$ be as above. We 
define the random coefficient field $a$ as follows: Let $A: \Omega \to \mathcal{M}_{d, \textrm{sym}}$ be a (measurable, matrix-valued) random variable satisfying for
$0 < \lambda \leq \Lambda$ and $\P$-almost every $\omega \in \Rd$
\begin{align}\label{ellipticity}
\lambda|\xi|^2 \leq \xi \cdot A(\omega) \xi \leq \Lambda|\xi|^2 \ \ \ \ \forall \zeta \in \Rd.
\end{align}
We set 
\begin{align}\label{coefficient.field}
a: \Rd \times \Omega \rightarrow  \mathcal{M}_{d, \textrm{sym}}, \ \ \ a(\omega, x) = T_x A( \omega) = A( \tau_x \omega).
\end{align}
Thus, for $\P$-almost every $\omega \in \Omega$ the operator $-\nabla \cdot a \nabla$ is bounded and uniformly elliptic.

\smallskip

We emphasize that we do not require that $\P$ is ergodic: By Birkhoff's ergodic theorem \cite{kre} we have that for any $F \in L^1(\Omega, \F, \P)$ and $\P$-almost every $\omega \in \Omega$
\begin{align}\label{Birkhoff}
 \lim_{R \uparrow +\infty} \fint_{|x|< R} F(\tau_x \omega) \, dx = \langle F \, | \, \I \rangle,
\end{align}
where the right-hand side is the conditional expectation of $F$ with respect to the $\sigma$-algebra $\I \subset \F$ generated by the sets
$$
A \in \F, \ \ \ \tau_x A = A \ \ \forall x \in \Rd. 
$$

\smallskip

We recall that in the ergodic case, i.e. when $\I$ is trivial and the right-hand side of \eqref{Birkhoff} is given by $\langle \, F \, \rangle$, it is
well-known \cite{Koslov79, PapVar} that the operator $-\nabla \cdot a(\omega, \frac{\cdot}{\eps}) \nabla$   homogenizes $\P$-almost surely to the operator
$-\nabla \cdot \ah \nabla$. The matrix $\ah \in \mathcal{M}_{d, \textrm{sym}}$ is constant and deterministic and is given by the formula
\begin{align}\label{ergodic.homogenized.coefficient}
e_i \cdot \ah e_j = \langle (e_i + \nabla \phi_i(0) ) \cdot A ( e_j + \nabla \phi_j(0)) \rangle, \ \ \ \ i,j= 1, \cdots, d.
\end{align}
Here, for each $i = 1, \cdots, d$ the random fields $\phi_i(\omega, \cdot)\in H^1_{loc}(\Rd)$ are the \textit{first-order correctors} \cite{GNO, Koslov79, PapVar} satisfying for almost every
$\omega \in \Omega$
\begin{equation}\label{corrector.eq}
\begin{aligned}
-\nabla \cdot a(\omega, x) \nabla (\phi_i(\omega, x) + x_i) = 0 \ \ \ \ \text{ in $\Rd$,}\\
\lim_{R\uparrow +\infty} R^{-2} \fint_{|x|< R} | \phi_i(\omega, x) - \fint_{|y|<R} \phi_i(\omega, y) \, dy |^2 \, dx = 0. 
\end{aligned}
\end{equation}
Note that the functions $\phi_i(\omega, \cdot)$ are uniquely defined up to a random variable. 

\subsection{Abstract results}
\begin{thm}\label{t.main}
Let $(\Omega, \F, \P)$ be as above and let $a$ be as in \eqref{coefficient.field}. Then, there exists a random variable $\ah: \Omega \to \mathcal{M}_{d, \textrm{sym}}$ such
that for any bounded open set $D \subset \Rd$, $f \in H^{-1}(D)$ and almost every $\omega \in \Omega$, the solutions to the Dirichlet problem
\begin{align}\label{P.eps}
 \begin{cases}
  -\nabla \cdot a(\omega, \frac x \eps) \nabla u_\eps(x) = f(x)  \hspace{1cm} &\text{in $D$}\\
  u_\eps = 0 &\text{on $\partial D$}
 \end{cases}
\end{align}
converge weakly in $H^1_0(D)$ to the (random) solution of
\begin{align}\label{P.hom}
 \begin{cases}
  -\nabla \cdot \ah(\omega) \nabla \uh(\omega, x) = f(x) \hspace{1cm} &\text{in $D$}\\
  \uh(\omega, x) = 0 &\text{on $\partial D$.}
 \end{cases}
\end{align}
Moreover,
\begin{align}\label{hom.conditional}
e_i \cdot \ah(\omega) e_j = \langle (e_i + \nabla \phi_i(0))\cdot A (e_i + \nabla \phi_j(0)) \, | \, \I \rangle,
\end{align}
where each $\phi_i(\omega, \cdot)$, $i=1, \cdots, d$ satisfies \eqref{corrector.eq} with respect to the probability space $(\Omega, \F, \P)$.
\end{thm}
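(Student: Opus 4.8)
The plan is to follow the classical Papanicolaou--Varadhan/Kozlov route, carrying the conditional expectation $\langle \cdot \,|\, \I\rangle$ in place of the expectation $\langle \cdot\rangle$ throughout, since the only structural difference from the ergodic case is that all homogenized quantities become $\I$-measurable random variables rather than deterministic constants. First I would construct the correctors using stationarity alone. Let $D_1,\dots,D_d$ denote the skew-adjoint infinitesimal generators of the strongly continuous group $\{T_x\}$ on $L^2(\Omega,\F,\P)$, and let $L^2_{\mathrm{pot}}(\Omega)\subset L^2(\Omega)^d$ be the closure of $\{(D_1 g,\dots,D_d g): g \in \mathcal D\}$ for a suitable core $\mathcal D$. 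A Lax--Milgram argument on $L^2_{\mathrm{pot}}(\Omega)$ (or the regularization $\eps \phi^\eps_i - \na\cdot a(\na\phi^\eps_i + e_i)=0$ followed by $\eps\downarrow 0$) produces, for each $i$, a field $\chi_i\in L^2_{\mathrm{pot}}(\Omega)$ with $\langle \chi_i\rangle = 0$ solving the corrector equation in the probability space; none of this uses ergodicity. Setting $\na\phi_i(\omega,x):=\chi_i(\tau_x\omega)$ and integrating the resulting curl-free field along paths yields $\phi_i(\omega,\cdot)\in H^1_{\mathrm{loc}}(\Rd)$ solving the first line of \eqref{corrector.eq}.

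The crucial observation for the non-ergodic setting is that $\langle \chi_i \,|\, \I\rangle = 0$ $\P$-a.s.: for any $A\in\I$ the indicator $1_A$ is translation invariant, so $D_j 1_A = 0$, and skew-adjointness of $D_j$ gives $\langle (D_j g)\, 1_A\rangle = -\langle g\, D_j 1_A\rangle = 0$; since $\xi\mapsto\langle \xi\cdot c\,1_A\rangle$ is $L^2$-continuous and vanishes on the dense set of gradients, it vanishes on all of $L^2_{\mathrm{pot}}(\Omega)$. I then define $\ah(\omega)$ by $e_i\cdot\ah(\omega)e_j := \langle e_i\cdot A(e_j+\chi_j)\,|\,\I\rangle$; testing the corrector equation against $\phi_i$ symmetrizes this into the energy form \eqref{hom.conditional}, and the bounds \eqref{ellipticity} are inherited by $\ah$ with the same $\lambda,\Lambda$ via the standard variational estimate.

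Next I would establish the single analytic lemma that drives the limit passage: for any stationary $g\in L^2(\Omega)$ and $\P$-a.e. $\omega$, the rescaled field $g(\tau_{\cdot/\eps}\omega)$ converges weakly in $L^2_{\mathrm{loc}}(\Rd)$ to the \emph{random} constant $\langle g\,|\,\I\rangle(\omega)$. This upgrades Birkhoff's theorem \eqref{Birkhoff} from convergence of ball averages to testing against arbitrary $\varphi\in C_c^\infty(\Rd)$: one approximates $\varphi$ by step functions on a dyadic grid and applies the spatial ergodic theorem on each cube, using $\fint_{B_R}|g(\tau_x\omega)|^2\,dx\to\langle |g|^2\,|\,\I\rangle(\omega)$ to control remainders. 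Applying this to $\chi_i$ (whose conditional mean vanishes by the previous step) gives $\na\phi_i(\omega,\cdot/\eps)\rightharpoonup 0$, while applying it to the flux $q_i:=A(e_i+\chi_i)$ gives $a(\omega,\cdot/\eps)(e_i+\na\phi_i(\omega,\cdot/\eps))\rightharpoonup \ah(\omega)e_i$, with $q_i(\tau_{\cdot/\eps}\omega)$ divergence-free by the corrector equation. The same averaging yields the sublinearity bound in the second line of \eqref{corrector.eq}. All these hold off a single $\P$-null set, since only finitely many stationary fields are involved. Finally I would run the div--curl argument: fixing a bounded open $D$, $f\in H^{-1}(D)$, and an admissible $\omega$, uniform ellipticity gives $\|u_\eps\|_{H^1_0(D)}\lesssim\|f\|_{H^{-1}(D)}$, so along a subsequence $u_\eps\rightharpoonup u_*$ in $H^1_0(D)$ and $a(\omega,\cdot/\eps)\na u_\eps\rightharpoonup\sigma$ in $L^2(D)$ with $-\na\cdot\sigma=f$. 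Since $a$ is symmetric, $a(\omega,\cdot/\eps)\na u_\eps\cdot(e_i+\na\phi_i(\omega,\cdot/\eps))=\na u_\eps\cdot q_i(\tau_{\cdot/\eps}\omega)$ pointwise; the Murat--Tartar compensated compactness lemma applied to the pair (flux, curl-free corrector gradient) on the left and to (curl-free $\na u_\eps$, divergence-free corrector flux) on the right forces $\sigma\cdot e_i=\na u_*\cdot\ah(\omega)e_i$. Varying $i$ and using symmetry of $\ah$ gives $\sigma=\ah(\omega)\na u_*$, hence $-\na\cdot\ah(\omega)\na u_*=f$; uniqueness for \eqref{P.hom} forces $u_*=\uh(\omega,\cdot)$ and upgrades the convergence to the full family $\eps\downarrow 0$.

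I expect the main obstacle to be the weak-convergence lemma of the third paragraph: Birkhoff gives only convergence of ball averages for a fixed realization, and turning this into weak $L^2_{\mathrm{loc}}$ convergence toward the \emph{$\omega$-dependent} limit $\langle g\,|\,\I\rangle(\omega)$—simultaneously for the correctors and their fluxes on one common full-measure set—is precisely where the non-ergodic bookkeeping enters. The subsequent div--curl identification is then verbatim the ergodic argument, the sole difference being that the limiting coefficient $\ah(\omega)$ is random and $\I$-measurable.
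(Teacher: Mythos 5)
Your route is structurally the paper's own: correctors built by Lax--Milgram on the closure of $\{Dg : g \in \mathcal{D}\}$, vanishing conditional expectation of that space, an a.s.\ weak-convergence lemma for rescaled stationary fields obtained from Birkhoff's theorem plus a separability/approximation argument, and Tartar's div--curl lemma. So the comparison reduces to whether each step is actually carried out, and one is not: the identification
\begin{align}
\langle e_i \cdot A(e_j + \chi_j) \, | \, \I \rangle = \langle (e_i + \chi_i) \cdot A(e_j + \chi_j) \, | \, \I \rangle,
\end{align}
equivalently the \emph{conditional} flux orthogonality $\langle \chi_i \cdot A(e_j + \chi_j) \, | \, \I \rangle = 0$ $\P$-a.s., which you dispatch with ``testing the corrector equation against $\phi_i$ symmetrizes this into the energy form.'' Testing the corrector equation in the probability space with $\Psi = \chi_i$ only gives $\langle \chi_i \cdot A(e_j + \chi_j) \rangle = 0$, the statement in full expectation; in the non-ergodic setting this is strictly weaker than the conditional statement, and the distance between the two is exactly where the difficulty of the theorem sits. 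Moreover this identity is load-bearing for you twice over: it is \eqref{hom.conditional} itself, and it is the only reason your matrix $\ah$ (defined through the non-symmetrized flux formula) is symmetric --- a fact you invoke at the end of the div--curl argument, since without it compensated compactness only yields $\sigma = \ah^t \nabla u_*$.

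There are two ways to close this gap. The paper's way is to work in physical space: test \eqref{corrector.eq} for $\phi_j$ with $\eta_R(\phi_i - \fint_{|x|<2R}\phi_i)$, where $\eta_R$ is a cutoff of $\{|x|<R\}$ in $\{|x|<2R\}$, which yields
\begin{align}
\Bigl| \fint_{|x|<R} \nabla\phi_i \cdot a\, (e_j + \nabla \phi_j) \Bigr| \leq C R^{-1} \Bigl( \fint_{|x|<2R} \bigl|\phi_i - \fint_{|x|<2R}\phi_i\bigr|^2 \Bigr)^{\frac12}\Bigl( 1 + \fint_{|x|<2R} |\nabla \phi_j|^2 \Bigr)^{\frac 12};
\end{align}
by sublinearity the right-hand side vanishes as $R \uparrow +\infty$, while by Birkhoff's theorem the average on the left converges a.s.\ to $\langle \chi_i \cdot A(e_j+\chi_j) \,|\, \I\rangle$, which therefore must vanish. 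Alternatively, you can stay abstract and reuse your own multiplication trick: for $A \in \I$ the indicator $1_A$ is translation invariant, so $1_A g_n$ lies in the core $\mathcal{D}$ with $D(1_A g_n) = 1_A D g_n$; hence $1_A \chi_i \in L^2_{\mathrm{pot}}(\Omega)$ is an admissible Lax--Milgram test field, and testing gives $\langle 1_A\, \chi_i \cdot A(e_j + \chi_j) \rangle = 0$ for every $A \in \I$, which is precisely the conditional statement. With either repair the remainder of your argument goes through and coincides with the paper's proof.
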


The term on the right-hand side of \eqref{hom.conditional} admits a further reformulation in terms of the ergodic decomposition for the measure $\P$. 
This is a standard result in the theory of asymptotically mean stationary processes (see, e.g., \cite{Gray.Probability2001}[Chapter 7, Theorem 7.4.1]):

\begin{lem}[\bf Ergodic decomposition]\label{Ergodic.disintegration}
{There exist} a family $\{  \P_\xi \}_{\xi \in \Omega_0}$ of ergodic and stationary probability measures on
$(\Omega, \F)$ and a probability space $(\Omega_0, \I_0, \tilde \P)$ such that the measure $\P$ admits the disintegration
\begin{align}\label{ergodic.disintegration}
\langle \ F \ \rangle = \int_{\Omega_0 } \biggl( \int_\Omega F(\omega) \P_{\xi} (d\omega) \biggr)\, \tilde \P( d\xi) \ \ \ \ \forall F \in L^1(\Omega, \F, \P).
\end{align}
Furthermore, there exists a measurable map
\begin{align}
 \Pi: (\Omega, \I) \to (\Omega_0, \I_0) 
\end{align}
 such that for every $F \in L^1(\Omega, \F, \P)$ the conditional expectation $\langle F \, | \, \I \rangle$ may be identified with a random variable in $L^1(\Omega_0, \I_0, \tilde \P)$ via the relation 
 \begin{align}\label{integration.disintegrated}
\langle F \, | \, \I \rangle(\omega) = \int_\Omega F(\tilde \omega) P_{\Pi(\omega)}(d\tilde\omega) \ \ \ \text{ for $\P$-almost every $\omega \in \Omega$.}
\end{align}
\end{lem}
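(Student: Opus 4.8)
The plan is to realize the disintegration \eqref{ergodic.disintegration} through the \emph{regular conditional probability} of $\P$ given the invariant $\sigma$-algebra $\I$. Since $\F$ is countably generated, I would first fix a standard Borel realization of $(\Omega,\F)$ (the countable generation lets us embed $\Omega$ into $\{0,1\}^\N$, where regular conditional distributions are guaranteed to exist) and construct a probability kernel $\omega\mapsto\P(\,\cdot\mid\I)(\omega)$ on $(\Omega,\F)$ that is $\I$-measurable in $\omega$ and satisfies $\int_A\P(B\mid\I)\,\P(d\omega)=\P(A\cap B)$ for all $A\in\I$, $B\in\F$. I would then let $\Omega_0=\Omega/\!\sim$ be the quotient of $\Omega$ by the equivalence relation identifying points not separated by $\I$, with quotient map $\Pi:(\Omega,\I)\to(\Omega_0,\I_0)$. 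Because $\omega\mapsto\P(\,\cdot\mid\I)(\omega)$ is $\I$-measurable, it factors through $\Pi$, defining the family $\{\P_\xi\}_{\xi\in\Omega_0}$ via $\P_{\Pi(\omega)}=\P(\,\cdot\mid\I)(\omega)$, and I set $\tilde\P:=\Pi_*\P$.

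Given this construction, the identity \eqref{ergodic.disintegration} is essentially the tower property: for $F=1_B$ one has $\langle F\rangle=\langle\langle F\mid\I\rangle\rangle=\int_{\Omega_0}\big(\int_\Omega 1_B\,\P_\xi(d\omega)\big)\tilde\P(d\xi)$, and a monotone class / density argument extends this from indicators to all $F\in L^1(\Omega,\F,\P)$. The factorization \eqref{integration.disintegrated} of $\langle F\mid\I\rangle$ through $\Pi$ is then immediate from the defining property of the kernel together with the $\I$-measurability that makes it constant along $\Pi$-fibers.

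The substance of the proof is to show that $\P_\xi$ is both stationary and ergodic for $\tilde\P$-almost every $\xi$. For stationarity, I would fix $x\in\Rd$ and use that every $A\in\I$ satisfies $\tau_x^{-1}A=A$, together with \eqref{stationarity}, to obtain $\int_A\P(\tau_x^{-1}B\mid\I)\,\P(d\omega)=\P(A\cap\tau_x^{-1}B)=\P(A\cap B)=\int_A\P(B\mid\I)\,\P(d\omega)$, whence $\P(\tau_x^{-1}B\mid\I)=\P(B\mid\I)$ $\P$-almost surely. The delicate point is to upgrade this to a \emph{single} null set valid simultaneously for all $x\in\Rd$: here I would exploit the separability of $L^1(\Omega,\F,\P)$ and the strong continuity of the $T_x$ guaranteed by \eqref{stochastic.continuity}, reducing to a countable dense set of translations and passing to the limit.

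For ergodicity, the cleanest route is via Birkhoff's theorem \eqref{Birkhoff}. Fixing a countable dense family $\{F_n\}\subset L^1(\Omega,\F,\P)$, the averages $\fint_{|x|<R}F_n(\tau_x\omega)\,dx$ converge $\P$-almost surely to $\langle F_n\mid\I\rangle(\omega)=\int_\Omega F_n\,\P_{\Pi(\omega)}(d\tilde\omega)$, which is constant along $\Pi$-fibers. Transporting this through the disintegration \eqref{ergodic.disintegration}, one obtains for $\tilde\P$-almost every $\xi$ that $\fint_{|x|<R}F_n(\tau_x\cdot)\,dx\to\int F_n\,d\P_\xi$ holds $\P_\xi$-almost surely with a \emph{constant} limit; by density this persists for all $F\in L^1$, which is exactly the characterization of ergodicity of $\P_\xi$. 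I expect this last step---simultaneously controlling the null sets over the continuum of transformations and over the full family of test functions so as to obtain ergodicity of $\P_\xi$ for a.e.\ $\xi$---to be the main obstacle, and the separability of the $L^p$ spaces together with the stochastic continuity \eqref{stochastic.continuity} to be the decisive tools that overcome it.
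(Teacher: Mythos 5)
Your proposal takes a genuinely different route from the paper. The paper never conditions abstractly and verifies afterwards: it \emph{constructs} the measures directly as Birkhoff spatial averages,
\begin{align}
\P_\omega(B) := \lim_{R\uparrow+\infty} \fint_{|x|<R} \1_B(\tau_x\omega)\,dx, \qquad B \in \mathcal{S},
\end{align}
for a countable generating family $\mathcal{S}$, defines the equivalence $\omega\sim\tilde\omega \Leftrightarrow \P_\omega=\P_{\tilde\omega}$, and takes $\tilde\P=\P\circ\Pi^{-1}$. With this construction stationarity (translating the averaging ball does not change the limit) and ergodicity ($\P_\omega(I)=\1_I(\omega)\in\{0,1\}$ for $I\in\I$, since invariant indicators are constant along orbits) are essentially built in, and the identification with $\langle\,\cdot\,|\,\I\,\rangle$ follows from uniqueness of measures agreeing on $\mathcal{S}$. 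Your plan instead produces the kernel as a regular conditional probability and must then \emph{prove} that the fibers are stationary and ergodic; this is a legitimate standard alternative, but as written it has a genuine gap exactly at the ergodicity step.

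The gap is this: your transport argument tacitly assumes that $\P_\xi$ is concentrated on its own fiber, i.e. $\P_\xi(\Pi^{-1}(\xi))=1$ for $\tilde\P$-almost every $\xi$. Pushing the $\P$-a.s.\ convergence through \eqref{ergodic.disintegration} only gives that, under $\P_\xi$, the Birkhoff limit of $F_n$ equals the \emph{random variable} $\omega\mapsto\int F_n\,d\P_{\Pi(\omega)}$; to replace it by the constant $\int F_n\,d\P_\xi$ you need $\Pi(\omega)=\xi$ for $\P_\xi$-a.e.\ $\omega$, and this does not follow from the disintegration identity. It is an extra property of regular conditional probabilities which is guaranteed when the conditioning $\sigma$-algebra is \emph{countably generated} --- and that is precisely the problem: $\I$ need not be countably generated even though $\F$ is (invariant $\sigma$-algebras typically are not; they are only countably generated modulo $\P$-null sets). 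The missing idea is the standard repair: use separability of $L^1(\Omega,\F,\P)$ to choose a countably generated $\I'\subset\I$ with $\I'=\I$ modulo $\P$-null sets, build the kernel and the quotient $\Omega_0$ from $\I'$, and invoke a.s.\ concentration of the kernel on $\I'$-atoms. Two further steps fail as stated for the same underlying reason, namely that the $\P_\xi$ are in general mutually singular and singular with respect to $\P$: (i) ``by density this persists for all $F\in L^1$'' is invalid, since $L^1(\Omega,\F,\P)$-density gives no approximation in $L^1(\Omega,\F,\P_\xi)$ --- instead run the argument on indicators of a countable generating $\pi$-system and conclude $\P_\xi(I)\in\{0,1\}$ for all $I\in\I$ by a Dynkin-class argument, which already is ergodicity; (ii) the upgrade of stationarity from a countable dense set of translations to all $x\in\Rd$ cannot invoke \eqref{stochastic.continuity} under $\P_\xi$, so the continuity itself must be passed through the disintegration (e.g.\ Fatou along subsequences) or handled via $\I'$. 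Finally, a caveat on your first step: countable generation of $\F$ alone does not guarantee existence of regular conditional probabilities (Dieudonn\'e's counterexample); after pushing forward to $\{0,1\}^{\N}$ one must still deal with the image of $\Omega$ not being Borel --- a technicality the paper's Birkhoff-limit construction bypasses entirely.
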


\medskip

The next corollary relies on the previous decomposition of $\P$ to show that the random homogenized matrix of Theorem \ref{t.main} may be obtained by fixing the element
$\xi \in \Omega_0$ and applying on the probability space $(\Omega, \F, \P_\xi)$ the standard homogenization results \cite{GNO, Koslov79, PapVar} for stationary and ergodic
measures. For $\xi \in \Omega_0$ fixed, let indeed $a_{\mathrm{h}, \xi}$ be the deterministic homogenized matrix obtained by means of classical homogenization 
and defined as 
\begin{align}\label{fiber.homogenized.coefficient}
e_i \cdot a_{\mathrm{h}, \xi} e_j = \int_\Omega (e_i + \nabla \phi_{\xi,i}(0,\omega)) \cdot a(\omega) (e_j  + \nabla \phi_{\xi,j}(0, \omega)) \P_{\xi}(d\omega),
\end{align}
with $\phi_{i,\xi}$ the correctors solving \eqref{corrector.eq} with respect to $(\Omega, \F, \P_\xi)$. Then:

\smallskip

\begin{cor}\label{cor.abstract}
Let $\ah$ be the homogenized matrix introduced in Theorem \ref{t.main} and let $\Pi$ be the projection map of Lemma \ref{ergodic.disintegration}. 
Then, for $\tilde\P$-almost every $\xi \in \Omega_0$ and all $\omega \in \Pi^{-1}(\xi)$ we have
\begin{align}\label{abstract.reformulation}
\ah(\omega) = a_{\mathrm{h}, \xi}.
\end{align}
Therefore, for all $B \in \B(\mathcal{M}_{d, \textrm{sym}})$ we have
\begin{align}
 \P( \{ \omega \, \colon \, \ah(\omega) \in B \}) = \tilde\P(\{ \xi \, \colon \,  a_{\mathrm{h}, \xi} \in B \}).
\end{align}

\end{cor}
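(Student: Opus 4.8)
The plan is to derive the fiberwise identity \eqref{abstract.reformulation} by exploiting that the homogenized matrix is a \emph{pathwise} functional of the single realization $a(\omega,\cdot)$: the two matrices $\ah(\omega)$ and $a_{\mathrm{h},\xi}$ are produced by homogenizing the \emph{same} family of Dirichlet problems, once with respect to $\P$ and once with respect to $\P_\xi$, so they must agree whenever $\xi=\Pi(\omega)$ and both homogenization statements apply to that realization.

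First I would record the elementary but crucial uniqueness of the homogenized matrix for a fixed realization: if, for every $f\in H^{-1}(D)$, the solutions $u_\eps$ of \eqref{P.eps} converge weakly in $H^1_0(D)$ both to the solution of \eqref{P.hom} with matrix $M_1$ and to the one with matrix $M_2$, then passing to the weak formulation and varying $f$ gives $\int_D \nabla v\cdot(M_1-M_2)\nabla u=0$ for all $v\in H^1_0(D)$ and all admissible $u$, which forces $M_1=M_2$ for constant symmetric matrices. Thus (almost) every realization determines its homogenized matrix independently of the ambient measure.

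Next I would run two homogenization results on the same realizations. Theorem~\ref{t.main} provides $N_0\in\F$ with $\P(N_0)=0$ such that for $\omega\notin N_0$ the operator homogenizes to $\ah(\omega)$. Since by Lemma~\ref{Ergodic.disintegration} each $\P_\xi$ is stationary and ergodic on $(\Omega,\F)$, the classical result \cite{Koslov79,PapVar} applied to $(\Omega,\F,\P_\xi)$ yields, for $\tilde\P$-a.e.\ $\xi$, a $\P_\xi$-null set $M_\xi$ such that for $\omega\notin M_\xi$ the operator homogenizes to the deterministic matrix $a_{\mathrm{h},\xi}$ of \eqref{fiber.homogenized.coefficient}. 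Feeding $F=\mathbf 1_{N_0}$ into the disintegration \eqref{ergodic.disintegration} gives $\int_{\Omega_0}\P_\xi(N_0)\,\tilde\P(d\xi)=\P(N_0)=0$, hence $\P_\xi(N_0)=0$ for $\tilde\P$-a.e.\ $\xi$. Therefore, for $\tilde\P$-a.e.\ $\xi$ and every $\omega$ outside the $\P_\xi$-null set $N_0\cup M_\xi$, the realization $a(\omega,\cdot)$ homogenizes both to $\ah(\omega)$ and to $a_{\mathrm{h},\xi}$; by the uniqueness of the previous step, $\ah(\omega)=a_{\mathrm{h},\xi}$.

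Finally I would upgrade this $\P_\xi$-a.e.\ identity to the pointwise statement over the whole fiber. Since $\ah=\langle\,\cdot\,|\,\I\rangle$ is $\I$-measurable and $\I$ is generated by $\Pi$, it factors as $\ah=g\circ\Pi$ for an $\I_0$-measurable $g$, so $\ah$ is genuinely constant on each fiber $\Pi^{-1}(\xi)$. Testing \eqref{ergodic.disintegration} on $\I$-measurable integrands shows $\Pi_\ast\P_\xi=\delta_\xi$ for $\tilde\P$-a.e.\ $\xi$, so the $\P_\xi$-full set on which $\ah(\omega)=a_{\mathrm{h},\xi}$ meets $\Pi^{-1}(\xi)$; combined with $\ah\equiv g(\xi)$ on that fiber this yields $g(\xi)=a_{\mathrm{h},\xi}$, hence $\ah(\omega)=a_{\mathrm{h},\xi}$ for all $\omega\in\Pi^{-1}(\xi)$, which is \eqref{abstract.reformulation}. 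The same disintegration computation gives $\Pi_\ast\P=\tilde\P$, whence, writing $S_B:=\{\xi:a_{\mathrm{h},\xi}\in B\}$, one gets $\P(\{\omega:\ah(\omega)\in B\})=\P(\{\omega:\Pi(\omega)\in S_B\})=\tilde\P(S_B)$ for every $B\in\B(\mathcal{M}_{d,\mathrm{sym}})$, using the measurability of $\xi\mapsto a_{\mathrm{h},\xi}$. The main obstacle I anticipate is the bookkeeping that lets the classical per-fiber homogenization be invoked simultaneously for $\tilde\P$-a.e.\ $\xi$ — choosing the exceptional sets $M_\xi$ measurably in $\xi$ and establishing measurability of $\xi\mapsto a_{\mathrm{h},\xi}$ — together with checking that the standing assumptions (stochastic continuity \eqref{stochastic.continuity} and separability) needed to run classical homogenization survive the passage from $\P$ to its ergodic components $\P_\xi$.
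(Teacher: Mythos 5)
Your proposal is correct in substance, but it takes a genuinely different route from the paper's. The paper works at the level of correctors: starting from \eqref{hom.conditional} and \eqref{integration.disintegrated}, it rewrites $e_i\cdot\ah(\omega)e_j$ as an integral against $\P_\xi$ involving the $\P$-corrector $\phi$, and then shows that $\nabla\phi$ coincides $\P_\xi$-almost surely with the gradient of the $\P_\xi$-corrector $\phi_\xi$: both solve \eqref{corrector.eq} for $\P_\xi$-almost every realization (the same null-set transfer you perform with $\1_{N_0}$), so their difference is an $a$-harmonic function with sublinear growth, forcing $\nabla\phi=\nabla\phi_\xi$ in $L^2(\Rd)$; a stationarity-plus-mollification argument based on \eqref{stochastic.continuity} then upgrades this to the identity $\chi=\chi_\xi$ in $L^2(\Omega,\F,\P_\xi)$, and substitution turns \eqref{hom.conditional} into \eqref{fiber.homogenized.coefficient}. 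You instead treat Theorem \ref{t.main} and the classical ergodic result as black boxes, compare their outputs on a common $\P_\xi$-full set of realizations, and conclude via uniqueness of weak limits together with the fact that a constant symmetric matrix is determined by its Dirichlet solution operator, followed by the factorization $\ah=g\circ\Pi$. What your route buys: it bypasses the corrector-identification machinery entirely and is more elementary. What the paper's route buys: it proves the finer statement that the corrector gradients themselves agree under $\P_\xi$, so that the conditional-expectation formula \eqref{hom.conditional} literally becomes the ergodic formula \eqref{fiber.homogenized.coefficient}, not merely that the two matrices coincide. Two shared caveats, which you partly flag yourself: (i) both arguments need the classical theory to apply on $(\Omega,\F,\P_\xi)$, i.e.\ that the ergodic components inherit the standing assumptions such as \eqref{stochastic.continuity} --- the paper uses this silently; (ii) your appeals to $\Pi_*\P_\xi=\delta_\xi$ and $\Pi_*\P=\tilde\P$ rest on properties of $\Pi$ (namely $\I=\Pi^{-1}(\I_0)$ and the definition of $\tilde\P$ as the push-forward $\P\circ\Pi^{-1}$) that are established in the paper's proof of Lemma \ref{Ergodic.disintegration} rather than stated in the lemma itself; with those in hand your bookkeeping goes through, and your handling of the ``for all $\omega\in\Pi^{-1}(\xi)$'' clause via fiber-constancy is in fact more careful than the paper's own.
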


As we show in the next section, this abstract result admits a more explicit formulation in the case of coefficients being generated by a stationary Gaussian field. 

\medskip

\begin{rem}\label{convex.comb}{\bf Convex combination of stationary measures. } 
Lemma \ref{Ergodic.disintegration} is a generalization of the fact that stationarity is closed under convex combination and that ergodic measures are
extremal points of any convex set. More precisely, let $\{ \P_i \}_{i=1}^N$ be $N < +\infty$ be distinct stationary and ergodic probability measures
on $(\Omega, \F)$, with $\F$ countably generated. Let $\P$ be the  measure obtained as the convex combination: 
\begin{align}\label{convex.P}
\P = \sum_{k=1}^N \alpha_k \P_k, \ \ \ \ \ \ \sum_{k=1}^N \alpha_k = 1, \ \ 0\leq \alpha_k \leq 1.
\end{align}
It is easy to check that $\P$ is a stationary probability measure. Moreover, $\P$ is ergodic if and only if it is an extremal point, i.e. there exists 
$\alpha_k= 1$ for some $k \in \{1, \cdots, N\}$. 

\smallskip

We argue the only non trivial implication of the previous statement: Let us assume that $\P$ is ergodic. We show that if there exists $i \in \{1, \cdots, N \}$ such that $\alpha_{i} \in (0, 1)$, 
then the ergodicity property is contradicted. Indeed, by the last two conditions in \eqref{convex.P}, the previous assumption implies that there exists another
$j \in \{1, \cdots, N\}$, $j\neq i$, such that $\alpha_j \in (0,1)$. Moreover, since  all the measures are distinct, we may find
a set $B \in \F$ such that $\P_i(B)>0$ and $\P_i(B) \neq \P_j(B)$. By Birkhoff's theorem and the assumption on the ergodicity of each measure $\P_k$, it follows that the 
set
\begin{align}
A = \{ \omega \in \Omega \, \colon \,  \lim_{R \uparrow +\infty} \fint_{|x|< R} T_x\1_{B}(\omega) = \P_i(B)\}
\end{align}
satisfies $\P_{i}(A)=1$, $\P_j(A)=0$. Note that the set $A \in \I$. Hence, we have by \eqref{convex.P} that
$$
\P(A) = \alpha_i + \sum_{k =1, \atop k \neq i,j}^N \alpha_k \P_k(A). 
$$
Since we assumed that $\alpha_i, \alpha_j \in (0,1)$, $\P_k$ are probability measures and $\sum_{k=1}^N \alpha_k=1$, we infer that $\P(A) \in (0,1)$. 
This yields a contradiction.

\smallskip

Similarly, we note that since $\F$ is countably generated, i.e. $\F= \sigma(\{ B_n\}_{n\in \N})$), also the sets
\begin{align}
C_i = \bigcap_{n\in \N} \{ \omega \in \Omega \, \colon \,  \lim_{R \uparrow +\infty} \fint_{|x|< R} T_x\1_{B_n}(\omega) = \P_i(B_n)\}
\end{align}
satisfy $\P_{i}(C_j) = \delta_{ij}$ for all $i,j = 1,\cdots, N$. In particular, $\{ C_i\}_{i=1}^N$ they provide a $\P$-essential partition for $\Omega$ in each
one of which the limit of the spacial averages are given by integration in $\P_i$. Hence, in this easy case the set $\Omega_0$ of Lemma \ref{Ergodic.disintegration} is just
$\Omega_0 = \{1, \cdots, N\}$, the family of ergodic probabilities is $\{ P_i \}_{i=1}^N$ and $\tilde \P$ is 
the measure on $\I= \sigma(\{1, \cdots, N \})$ (uniquely) defined by $\tilde\P( i )= \alpha_i$ for $i= 1, \cdots, N$.
\end{rem}

\subsection{Application to stationary Gaussian fields}
The results of this section rely on \cite{SlezakGaussian}[Theorem 5 and Theorem 6]. For $d \geq 2$, $n \geq 1$, let $X$ be a stationary $\R^n$-valued Gaussian field on
$\Rd$ having continuous trajectories, i.e. the space of trajectories is given by $\Omega = C^0(\Rd, \R^n)$ with $\F$ the $\sigma$-algebra of the cylindrical sets. We assume that the group of transformations
$\{ \tau_x \}_{x\in \Rd}$ acts on each trajectory in $\Omega$ as $\tau_x X(\cdot) = X(\cdot + x)$. {With this choice of $\F$ and $\Omega$ condition \eqref{stochastic.continuity} is satisfied.}

\smallskip

Let $X$ be centered. We recall that for a given Gaussian field $X$, the autocorrelation matrix is given by
\begin{align}
 C(x) := \langle X(x) \otimes X(0) \rangle \in \R^{n \times n}, \ \ \ x \in \Rd.
\end{align}
Note that by stationarity we have that for all $x, y \in \Rd$ we have 
\begin{align}\label{symmetry}
\langle X(x) \otimes X(y) \rangle= C(x-y),\ \ \  \ \ C(x) = C^t(-x).
\end{align}
For $C \in C^0(\Rd)$, by Bochner's theorem \cite{yosida1995functional}[Chapter XI, Section 14] we may write
\begin{align}
C(x) = \frac{1}{(2\pi)^{\frac d 2}}\int_{\Rd} e^{-i x \cdot \xi} \hat C(\xi) \, d\xi,
\end{align}
where $\hat C(\xi)$, usually known as {\it spectral measure}, is a positive definite $\C^{n\times n}$-valued measure on $\Rd$. We remark that by \eqref{symmetry} it is 
easy to check that $\hat C$ satisfies
\begin{align}\label{spectral.measure.symmetry}
\hat C(\xi) = \hat C(\xi)^*, \ \ \ \ \hat C(\xi) = \hat C(-\xi)^t.
\end{align}

\smallskip

In the case of stationary Gaussian field, the ergodicity of the process $X$ under the translation group 
$\{\tau_x \}_{x\in \Rd}$ is equivalent to requiring that spectral measure $\hat C$ does not have an atomic part \cite{blum1972, eisenberg1972}. 
This and \eqref{spectral.measure.symmetry} yield that for any stationary Gaussian field, the non-ergodic behaviour is related to the presence in $\hat C$ of linear 
combinations of the form
\begin{align}\label{atomic.part}
\alpha_0 \delta_0 + \sum_{i=1}^{N} \alpha_i \delta_{-\omega_i} + \alpha_i^t \delta_{\omega_i}, 
\end{align}
for a positive-definite $\alpha_0 \in \R^{n \times n}$, hermitian matrices $\{\alpha_i\}_{i=1}^{N} \subset \C^{n \times n}$ and $\{\omega_i \}_{i=1}^{N} \subset \R_+^d$. 
We remark that the terms in the sum above may also be infinite, i.e. $N  = +\infty$, but from now on we restrict ourselves to the case $N \in \N$.

\smallskip

The special structure of Gaussian fields allows to extract a more explicit formulation for the $\sigma$-algebra $\I$ of the invariant sets.  As we show in the proof of
the next statement, the presence of the non-zero atoms in the spectral measure $\hat C$ corresponds to cosine terms in the process $X$. This yields that the
large-scale behaviour of $X$ and the $\sigma$-algebra $\I$ of the invariant sets crucially depend on possible resonances between the frequencies of oscillations. 
To this purpose, for any collection of values $\Omega=\{ \omega_i \}_{i=1}^N \subset \R_+$, with $1 \leq N< +\infty$, in \eqref{atomic.part},  we introduce the subset of
$\Z^N$ defined by
\begin{align}
\mathcal{R}_{\Omega}:= \{ k \in \Z^N \, \colon \, \sum_{i=1}^N k_i \omega_i = 0 \}.
\end{align}

\begin{rem}\label{commensurable.resonances}
If $\mathcal{R}_{\Omega}$ is non-trivial, then we may always write
\begin{align}\label{resonances.dimension}
\mathcal{R}_\Omega = \text{Span}_{\Z}(v^1, \cdots , v^r), 
\end{align}
for $1 \leq r \leq N-1$ and with $\{v^1, \cdots, v^r\}  \subset \Z^N \backslash{\{ {0}\}}$ satisfying the condition of linear integer-independence
\begin{align}
\sum_{j=1}^r m_j v^j = {0} \ \ \ \ \Leftrightarrow m_j = 0, \ \ \ \text{for all $j=1,\cdots, r$.} 
\end{align}
 This results follows from the classical theory of linear diophantine equations: In fact, up to a permutation of the elements in $\Omega$,
 we may always assume that there exists an index $1 \leq M \leq N$ such that the values $\omega_1, \cdots, \omega_M$ are all rationally incommensurable and, if $M < N$,
 that for all $j = M+1, \cdots, N$   we have $\omega_j = \sum_{i=1}^M q^j_i \omega_i$ for a unique $M$-tuple $(q_1^j, \cdots q_M^j )\subset \Q$.  By using this decomposition, solving 
 $ \sum_{i=1}^N k_i \omega_i = 0$ for $k \in \Z^N$ reduces to solving the system of $M$ equations with rational coefficients $k_i - \sum_{j=M+1}^Nq^j_i k_j =0$,
 $i=1,\cdots M$, for the $N$ integer variables $k_1, \cdots, k_N$. This system has at most $N - M$ linearly integer-independent solutions $v^1, \cdots v^{N-M} \in \Z^N$ 
 \cite{schrijver1998theory}[Chapter 4, Corollary 4.1c and formula (6)]. Since $N- M \leq N -1$, identity \eqref{resonances.dimension} is obtained.
\end{rem}

 \smallskip

For $F : \R^n \rightarrow \mathcal{M}_{d, \textrm{sym}}$ continuous and pointwise elliptic in the sense of \eqref{ellipticity}, we define 
\begin{align}\label{a.gaussian}
 A(X):= F \circ X(0), \ \ \ \ a(X,x) = F \circ X(x).
\end{align}
In the sake of a leaner notation, we state the following corollary in the special case $n=1$, i.e. when the Gaussian field $X$ is real-valued, and comment afterwards
on the generalization of this result to $n \geq 1$.
\begin{cor}\label{c.gaussian}
Let $X$ be a stationary, centered, Gaussian field having continuous correlation function $C$ and spectral measure $\hat C$ with an atomic part given by \eqref{atomic.part}
for $N < +\infty$. Let $a$ be defined as in 
\eqref{a.gaussian}. Then 

\smallskip

\begin{itemize}
\item[(a)] If $\mathcal{R}_\Omega= \{ 0\}$, i.e. the values $\{ \omega_i \}_{i=1}^N$ are rationally incommensurable, then for every $B \in \B(\mathcal{M}_{d, \textrm{sym}})$ we have
\begin{align}
\P(\{ \omega : \ah(\omega) \in B \}) = \tilde\P( \{(x, r) \in \R \times (\R_+)^N \, \colon \, a_{\mathrm{h}, (x, r)} \in B \}),
\end{align}
with
\begin{align}
\tilde \P(dx, dr) = \frac{e^{-\frac{|x|^2}{2\alpha_0^2}}}{\sqrt{2\pi \alpha_0^2}} \,  \prod_{i=1}^N \frac{r_i e^{-\frac{r^2_i}{\alpha_i^2}}}{\alpha_i^2}  \ dx \, dr_1 \cdots dr_N,
\end{align}
where $\{\alpha_i\}_{i=0}^N$ are the amplitudes in \eqref{atomic.part}. In other words, $\tilde\P$ is the probability measure associated to an independent Gaussian random 
variable and $N$ independent Rayleigh random variables.

\smallskip

\item[(b)] If, otherwise, $r \geq 1$ in \eqref{resonances.dimension}, then for every $B \in \B(\mathcal{M}_{d, \textrm{sym}})$ we have as well
\begin{align}
\P(\{ \omega : \ah(\omega) \in B \}) = \tilde\P( \{(x, r, \eta) \in \R \times (\R_+)^N \times \R^r \, \colon \, a_{\mathrm{h}, (x, r, \eta)} \in B \}).
\end{align}
Here,
\begin{align}
\tilde \P( dx, dr, d\eta) = \tilde\P_1(dx, dr) \tilde\P_2(d\eta)
\end{align}
with $\tilde \P_1$ as in case (a) and $\tilde\P_2$ the probability associated to the vector $\eta = \{ \eta_1, \cdots, \eta_r\}$ obtained for each $j=1, \cdots, r$ as
\begin{align}
\eta_j = \sum_{i=1}^N v^j_i \phi_i \ \ \ \ \ \ \text{ mod$(2\pi)$},
\end{align}
for $v^j$ as in \eqref{resonances.dimension} and $\{ \phi_i\}_{i=1}^N$ independent random variables which are uniformly distributed on $[0, 2\pi)$.
\end{itemize}
\end{cor}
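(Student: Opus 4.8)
The plan is to combine the explicit ergodic decomposition of a stationary Gaussian field provided by \cite{SlezakGaussian} with the abstract transfer principle of Corollary \ref{cor.abstract}, and to carry out by hand the resonance analysis that pins down the invariant $\sigma$-algebra $\I$. First I would invoke the spectral representation: since the spectral measure $\hat C$ splits into its atomic part \eqref{atomic.part} and a non-atomic remainder, the field decomposes as a sum of mutually independent stationary Gaussian pieces,
\begin{align*}
X(x) = X_0 + \sum_{i=1}^N \bigl( A_i \cos(\omega_i\cdot x) + B_i \sin(\omega_i\cdot x)\bigr) + X_{\mathrm c}(x),
\end{align*}
where $X_0$ is a centered Gaussian of variance fixed by $\alpha_0$ (the atom at the origin), each pair $\pm\omega_i$ produces a trigonometric term with $A_i,B_i$ independent centered Gaussians of variance fixed by $\alpha_i$, and $X_{\mathrm c}$ is the continuous–spectrum part, which by \cite{blum1972, eisenberg1972} is ergodic. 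Passing to polar coordinates $A_i = R_i\cos\Phi_i$, $B_i=R_i\sin\Phi_i$ rewrites each trigonometric term as $R_i\cos(\omega_i\cdot x-\Phi_i)$, where the amplitudes $R_i$ are independent Rayleigh variables and the phases $\Phi_i$ are independent and uniform on $[0,2\pi)$, jointly independent of $X_0$, of the $R_i$, and of $X_{\mathrm c}$; this already produces the Gaussian and Rayleigh factors appearing in $\tilde\P$.

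The second step is to identify $\I$. Applying $\tau_y$ sends $X_0\mapsto X_0$, $R_i\mapsto R_i$, $\Phi_i\mapsto \Phi_i-\omega_i\cdot y$, and acts ergodically on $X_{\mathrm c}$. Hence $X_0$ and the $R_i$ are $\I$-measurable while $X_{\mathrm c}$ contributes nothing to $\I$, so the whole problem reduces to describing which functions of the phase vector $\Phi=(\Phi_1,\dots,\Phi_N)\in\T^N$ are invariant under the flow $y\mapsto \Phi-(\omega_1\cdot y,\dots,\omega_N\cdot y)$. A function is invariant precisely when it is constant on the closure of the one–parameter subgroup $\{(\omega_i\cdot y)_i \bmod 2\pi : \ y\in\Rd\}$ of $\T^N$. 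By Kronecker's theorem (equivalently, Pontryagin duality on $\T^N$) this closure is the subtorus whose character annihilator in $\Z^N$ is exactly $\mathcal{R}_{\Omega}=\{k:\sum_i k_i\omega_i=0\}$, and its invariant functions are therefore the functions of the quotient coordinates
\begin{align*}
\eta_j := \sum_{i=1}^N v^j_i\,\Phi_i \ \ (\mathrm{mod}\ 2\pi), \qquad j=1,\dots,r,
\end{align*}
for the basis $v^1,\dots,v^r$ of $\mathcal{R}_{\Omega}$ from \eqref{resonances.dimension}, which are indeed invariant since $\sum_i v^j_i\omega_i=0$.

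Collecting these facts determines the ergodic decomposition concretely. In case (a) we have $\mathcal{R}_{\Omega}=\{0\}$, the phase flow is dense in $\T^N$, and no phase information survives: $\I=\sigma(X_0,R_1,\dots,R_N)$, so $\Omega_0=\R\times(\R_+)^N$ and $\tilde\P$ is the law of $(X_0,R_1,\dots,R_N)$, i.e. the announced product of one Gaussian and $N$ Rayleigh distributions. In case (b) the extra invariants $\eta=(\eta_1,\dots,\eta_r)$ appear, giving $\Omega_0=\R\times(\R_+)^N\times\R^r$ and, because $\Phi$ is independent of $(X_0,R)$, a product measure $\tilde\P=\tilde\P_1\otimes\tilde\P_2$ with $\tilde\P_2$ the law of $(\sum_i v^j_i\Phi_i \bmod 2\pi)_j$ for i.i.d. uniform phases $\Phi_i$. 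In both cases the ergodic component $\P_\xi$ is the conditional law of $X$ given these invariants; inserting $\P_\xi$ into \eqref{fiber.homogenized.coefficient} defines $a_{\mathrm h,\xi}$, and Corollary \ref{cor.abstract} identifies $\ah(\omega)=a_{\mathrm h,\Pi(\omega)}$, so that pushing $\tilde\P$ forward under $\xi\mapsto a_{\mathrm h,\xi}$ yields the two displayed laws of $\ah$.

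The main obstacle is the second step: showing that the phase contributions to $\I$ are \emph{exactly} the resonant combinations $\eta_j$ and nothing more. This requires the orbit–closure computation for the phase flow on $\T^N$ and, equally, the verification that superimposing the ergodic continuous part $X_{\mathrm c}$ onto the almost periodic part creates no additional invariants beyond $(X_0,R,\eta)$ — a point for which the independence of the spectral components and the precise statements of \cite{SlezakGaussian}[Theorem 5 and Theorem 6] are essential.
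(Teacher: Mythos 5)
Your proposal follows essentially the same route as the paper: split the spectral measure into its atomic and continuous parts, rewrite the atomic piece in amplitude--phase form (independent Gaussian, Rayleigh amplitudes, uniform phases), identify the invariant $\sigma$-algebra $\I$ through the resonances, and transfer to the law of $\ah$ via Corollary \ref{cor.abstract}. The only difference is one of detail: you unpack the torus orbit-closure computation (Kronecker/Pontryagin duality) that the paper delegates wholesale to \cite{SlezakGaussian}[Theorems 5 and 6], while still --- correctly --- deferring to those theorems for the point that superimposing the weakly mixing continuous part creates no invariants beyond $(x_0, R, \eta)$.
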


The analogue of the previous result holds also in the higher-dimensional case $n \geq 1$, provided the random variables $x$, $ r_1, \cdots, r_N$ are
$\R^{n \times n}$-valued with each component independent and distributed as in the case $n=1$ above.

\smallskip

\section{Proofs}\label{s.proofs}
\begin{proof}[Proof of Theorem \ref{t.main}.] We resort to the proof of Theorem \ref{t.main} in the ergodic case \cite{GNO4, GNO, Koslov79} and show that only few 
modifications are needed in order to adapt it to our setting. 

\smallskip

Also in this case we rely on the construction of the sub-linear corrector $\phi= \{\phi_i\}_{i=1}^d$ satisfying \eqref{corrector.eq}. More precisely,
for every $i=1, \cdots, d$, we construct a random variable $\chi_i \in [L^2(\Omega, \F, \P)]^d$ which satisfies 
\begin{align}\label{zero.average}
\langle \chi_i \, | \,  \I \, \rangle = \langle \, \chi_i \, \rangle = 0
\end{align}
and such that
\begin{align}\label{gradient.corrector}
\nabla \phi_i(\omega, x) &= \chi_i(\tau_x \omega),
\end{align}
with $\phi_i$ solving \eqref{corrector.eq} for $\P$ almost every $\omega \in \Omega$.

\smallskip

To prove the existence of $\chi$ as above,  we modify the argument of \cite{GNO4} and enumerate below only the (few) steps which require
a non-trivial adaptation to our setting. Let $i =1, \cdots, d$ be fixed and let us write $\phi$ instead of $\phi_i$. Moreover, since  no 
ambiguity on the measure $\P$ considered occurs, we write $L^p(\Omega)$ instead of $L^p(\Omega, \F, \P)$.  For any $F \in L^2(\Omega)$ and $x \in \Rd$ 
let $T_x F:= F\circ \tau_x$. Thanks to \eqref{stationarity} and \eqref{stochastic.continuity}, the group of transformations $\{T_x \}_{x \in \Rd}$ provides a unitary and 
strongly continuous group of operators on $L^2(\Omega)$.  We may thus denote by $D_j$, $j=1, \cdots, d$, the infinitesimal generators of $T_{x \cdot e_j}$ 
{\cite{reed1981functional}[Subsection VIII.4]}, namely
\begin{align}\label{generator}
\lim_{h \downarrow 0^+}\frac{ T_{h e_j} - I}{h}= D_j \ \ \ \text{ in $L^2(\Omega)$.}
\end{align}
We denote by $\mathcal{D}:= \cap_{j=1}^d \mathcal{D}(D_j) \subset L^2(\Omega)$ the domain of the operator $D:=( D_1, \cdots , D_d)$ and note that, again by
\eqref{stochastic.continuity}, this set is dense in $L^2(\Omega)$.

\smallskip

We set
\begin{align}
&U := \{ \xi \in L^2(\Omega) \ \colon \ T_x \xi = \xi \ \forall x \in \Rd \},\\
&V(\Omega) :=  \overline{( \{  D\xi \, \colon \, \xi \in \mathcal{D} \})}^{L^2(\Omega)}.
\end{align}
Then, since
\begin{align}
U =  \{ \xi \in L^2(\Omega) \ \colon \ D \xi = 0 \text{ \ in $L^2(\Omega)$} \},
\end{align}
it follows that
\begin{align}\label{subspace.corrector}
V(\Omega) \subset (U^\perp)^d.
\end{align}
and that any element $\Psi \in V(\Omega)$ satisfies
\begin{align}
\langle \Psi \rangle = \langle \Psi \, | \, \I \, \rangle = 0.
\end{align}
Therefore, we define $\Psi \in V(\Omega)$ as the Lax-Milgram solution of 
\begin{align}\label{Lax.Milgram}
\langle \Psi \cdot A \chi \rangle = \langle \Psi \cdot A e_i \rangle,  \ \ \ \ \ \forall  \Psi \in V(\Omega),
\end{align}
with $A \in L^\infty(\Omega, \F, \P)$ as in \eqref{coefficient.field}. 

\smallskip

With this definition of $\chi \in V(\Omega)$ as the solution of \eqref{Lax.Milgram}, the same arguments used in \cite{GNO} yield \eqref{gradient.corrector} and  the first
line of \eqref{corrector.eq}. To conclude the proof of \eqref{corrector.eq}, we first observe that by $\chi \in L^2(\Omega)$ and the first identity in \eqref{gradient.corrector}, Neumann's ergodic theorem \cite{kre}[Theorem 1.4] yields  also that for almost every $\omega \in \Omega$
\begin{align}
\lim_{R \uparrow +\infty} \langle | \fint_{|x| < R}\nabla \phi(\omega, x) |^2 \rangle = 0.
\end{align}
From this identity we may argue exactly as in \cite{GNO4}[Proof of Corollary 1] and obtain also the last sub-linearity property in  \eqref{corrector.eq}.

\medskip

Equipped with the correctors $\{\phi_i \}_{i=1}^d$ as above, we argue as in the ergodic case to show Theorem \ref{t.main}: By \eqref{gradient.corrector}
and Birkhoff's ergodic theorem we have indeed that for almost every $\omega \in \Omega$ and every $R > 0$
$$
\lim_{\eps \downarrow 0}\int_{|x| < R} |\nabla \phi_i(\omega, \frac{x}{\eps})|^2 =\langle |\chi |^2 \, | \, \I \rangle.
$$
Furthermore, another application of Birkhoff's ergodic theorem together with a standard separability argument implies that for
almost every $\omega \in \Omega$ and every $\rho \in C^\infty_0(\Rd)$
\begin{align}
\lim_{\eps \downarrow 0}\int \rho(x)\nabla \phi_i(\omega, \frac x \eps) = \langle \chi_i \, | \, \I \, \rangle \int \rho(x) dx \stackrel{\eqref{zero.average}}{=} 0. 
\end{align}
These two limits yield for the whole family $\eps \downarrow 0^+$
 \begin{align}\label{gradients.to.zero}
 \nabla \phi_i(\omega, \frac \cdot \eps ) \rightharpoonup 0 \ \ \ \text{in $L^2_{loc}(\Rd)$.}
 \end{align}
Hence, for any bounded domain $D \subset B_R$, for some $R> 0$, the functions 
$$
w_i^\eps(\omega, x) := x_i + \eps\bigr(\phi_i(\omega, \frac{x}{\eps}) - \fint_{|x|< R}\phi_i( \omega, \frac{y}{\eps}) \, dy \bigr)
$$
satisfy for almost every $\omega \in \Omega$
\begin{align}\label{functions.converge}
w^\eps_i(\omega, \cdot) \rightharpoonup x_i \ \ \text{in $H^1(D)$.}
\end{align}
By \eqref{gradient.corrector} and the stationarity of $a$, we may argue similarly to obtain that for $\P$-almost every $\omega \in \Omega$
\begin{align}\label{fluxes.converge}
e_j \cdot a(\omega, \frac{x}{\eps}) \nabla w^\eps_i \rightharpoonup A_{\mathrm{h},ij}(\omega) \ \ \text{in $L^2_{loc}(\Rd)$.} 
\end{align}
We remark that the identification of the above limit with ${A}_{\mathrm{h},ij}$ as in \eqref{hom.conditional} follows by
$$
\langle \chi_j \cdot A ( e_i + \chi_i) \, | \, \I \, \rangle = 0.
$$
This identity is implied in turn by Birkhoff's ergodic theorem, \eqref{corrector.eq} and the bounds \eqref{ellipticity} for $a$ after taking the limit $R\uparrow +\infty$ in the estimate 
$$
| \fint_{|x| < R} \nabla \phi_j \cdot a (e_i + \phi_i) | \leq C R^{-1}\biggl( \fint_{|x|< 2R} |\phi_j - \fint_{|x|< 2R} \phi_j |^2 \biggl)^{\frac 12} \biggl( 1 +  \fint |\chi_i |^2 \biggr)^{\frac 1 2}.
$$
Here, the constant $C=C(d) < +\infty$. This estimate in turn easily follows by testing equation \eqref{corrector.eq} for $\phi_i$ with $\eta_R (\phi_j - \fint_{|x| < 2R} \phi_j)$, where $\eta_R$ is
 a cut-off function for $\{ |x| < R\}$ in $\{ |x| < 2R \}$.

\smallskip

Convergences \eqref{functions.converge} and \eqref{fluxes.converge} allow us to apply Tartar's Div-Curl lemma \cite{TartarBook2009}[Chapter 7, Lemma 7.2] as in the ergodic case and conclude the proof of Theorem \ref{t.main}.
\end{proof}

\begin{proof}[Proof of Lemma \ref{Ergodic.disintegration}]
We begin by constructing the family $\{ \P_{\xi} \}_{\xi \in \Omega_0}$ of stationary and ergodic measures on $(\Omega, \F)$: Let $\mathcal{S}$ be a countable collection of
sets generating $\F$. By Birkhoff's ergodic theorem, for $\P$-almost every $\omega \in \Omega$, we may define the probability measure $\P_\omega$ on $(\Omega, \F)$ as 
\begin{align}\label{asymptotic.prob}
\P_{\omega}(B) := \lim_{R \uparrow +\infty} \fint_{|x| < R} \1_{B}(\tau_x\omega) \, dx, \ \ \ B \in \mathcal{S}.
\end{align}
Since each probability measure is uniquely defined by its value on the generating set $\mathcal{S}$, it is immediate to check that $\P_{\omega}$ is stationary.
In addition, since if $I \in \I$ then the limit above exists for each $\omega \in \Omega$ and coincides with $\1_{\I}(\omega)$, from definition \eqref{asymptotic.prob}
it follows that
\begin{align}\label{ergodicity.fibers}
\P_\omega(I)= \1_{I}(\omega) \in \{ 0, 1 \}.
\end{align}
Equivalently, $\P_\omega$ is ergodic with respect to $\{\tau_x \}_{x\in \Rd}$.

\smallskip

Let $\Sigma_0 \in \F$ be the ($\P$-zero measure set) of elements $\omega \in \Omega$ for which $\P_\omega$ defined in \eqref{asymptotic.prob} does not exist. 
We introduce the equivalence relation on $\Omega$
\begin{align}\label{equivalence}
 \omega \sim \tilde \omega \ \ \Leftrightarrow  \ \ \P_\omega = \P_{\tilde \omega} \ \ \text{or} \ \ \omega, \tilde \omega \in \Sigma_0,
\end{align}
and define the quotient space  $\Omega_0 := \Omega / \sim$ and the projection operator 
\begin{align}\label{projection.map}
\Pi: \Omega \rightarrow  \Omega_0, \ \ \ \ \omega \mapsto \xi =\{ \tilde \omega \in \Omega \, \colon \, \tilde \omega \sim \omega \}.
\end{align}
Hence, thanks to \eqref{asymptotic.prob}, $\{ \P_{\omega} \}_{\omega \in \Omega} = \{ \P_\xi \}_{\xi \in \Omega_0}$ is a family of ergodic and stationary probability measures on $(\Omega, \F)$.
Rigorously, the probability $P_\xi$ corresponding to $\xi =\Pi(\Sigma_0) \in \Omega_0$ is not well-defined. However, since we take as measure 
$\tilde \P$ the push-forward $\P \circ \Pi^{-1}$, it follows that $\tilde \P (\Pi(\Sigma_0)) =0$  and thus that in the decomposition \eqref{ergodic.disintegration}
 the measure $\P_{\Pi(\Sigma_0)}$ is negligible.

\smallskip

We now define the $\sigma$-algebra $\I_0$ as the image of $\I$ under $\Pi$, i.e.
 $$
 \I_0:= \{ \Pi(I) \ \colon \ I \in \I \}, \ \ \ \ \Pi(I) := \{ \Pi(\omega) \, \colon \, \omega \in I \} \subset \Omega_0,
 $$ 
and argue that the above definition is well-posed and that $\I_0$ is a $\sigma$-algebra isomorphic to $\I$ in the sense that 
\begin{align}
I = \Pi^{-1} \circ \Pi (I),
\end{align}
 for every $I \in \I$. To do so, it suffices to observe that for every $I \in \I$ and $\xi \in \Omega_0$
\begin{align}\label{minimal.property}
\Pi^{-1}(\xi) \subset I \ \  \Leftrightarrow \ \ \ \Pi^{-1}(\xi)  \cap I = \emptyset.
\end{align}
The $\Rightarrow$ implication is trivial. For the $\Leftarrow$ implication we observe that whenever $\omega \in \Pi^{-1} (\xi) \cap I$, then  by 
\eqref{equivalence} and \eqref{ergodicity.fibers}  for every $\tilde \omega  \in \Pi^{-1}(\xi)$  we have that $\1_{I}(\tilde \omega) = \1_{I}( \omega ) = 1$.

\smallskip

From the previous argument and the fact that $\I \subset \F$, it follows that the map $\Pi$ is measurable from $(\Omega, \I)$ to $(\Omega_0 , \I_0)$, as well as from
$(\Omega , \F)$ to $(\Omega_0, \I_0)$. We define the probability measure $\tilde \P$ on $(\Omega_0, \I_0)$ as the push-forward of $\P$ under $\Pi$, i.e.
\begin{align}\label{P.tilde}
 \tilde \P = \P \circ \Pi^{-1}.
\end{align}

\smallskip

With these definitions of $\{ \P_\xi \}_{\xi \in \Omega_0}$ and $(\Omega_0, \I_0, \tilde\P)$, it remains to establish \eqref{ergodic.disintegration}, 
\eqref{integration.disintegrated}.  We begin with \eqref{integration.disintegrated} and use a standard approximation argument: Let $\F= \sigma(\mathcal{S})$.
For any $A \in \F$, by  Birkhoff's ergodic theorem we may construct for $\P$-almost every $\omega \in \Omega$ a probability measure $\hat{\P}_\omega$ on $(\Omega, \F)$ such that for all
$B \in \mathcal{S} \cup \{A \}$ it holds
$$
\hat{\P}_\omega(B)= \lim_{R\uparrow +\infty} \fint_{|x|<R} T_x 1_B(\omega) \, dx = \langle \1_B \, | \, \I \, \rangle.
$$
Since $\hat\P_\omega$ and $\P_\omega$ coincide on the set of generators $\mathcal{S}$, it follows by uniqueness that $\P_{\omega}(A) = \langle A \ | \, \I \, \rangle$ for 
$\P$-almost every $\omega \in \Omega$. Therefore,
\begin{align}
\langle \1_A \rangle = \int_{\Omega} \langle A \, | \, \I \, \rangle P(d\omega) = \int_{\Omega} \P_{\omega}(A)   P(d\omega).
\end{align}
By arguing similarly and using \eqref{stochastic.continuity}, for every $F \in L^1(\Omega, \F, \P)$ we have
\begin{align}
\langle F \rangle = \int_{\Omega} ( \int_\Omega F(\tilde \omega) \, \P_{\Pi(\omega)}(d\tilde\omega)  )\, \P(d\omega).
\end{align}
We now appeal to the definitions \eqref{projection.map} and \eqref{P.tilde} to conclude that
\begin{align}
\langle F \rangle = \int_{\Omega_0} \int_\Omega F(\omega) \P_\xi(d\omega) \, \tilde \P( d\xi),
\end{align}
i.e. formula \eqref{ergodic.disintegration}. The proof of this lemma is complete.
\end{proof}

\smallskip

\begin{proof}[Proof of Corollary \ref{cor.abstract}]
By \eqref{hom.conditional} of Theorem \ref{t.main} and \eqref{integration.disintegrated} of Lemma \ref{Ergodic.disintegration}, we may rewrite for $\P$- almost every 
$\xi \in \Omega_0$ and $\omega \in \Omega$ with $\Pi(\omega)=\xi$
\begin{align}\label{Ah.1}
e_i \cdot \ah(\omega) e_j = e_i \cdot \ah(\xi) e_j = \int_{\Omega_0} ( e_i + \nabla\phi_i(\omega, 0) ) \cdot a(0) ( e_j + \nabla \phi_j(\omega, 0) ) \P_\xi( d\omega).  
\end{align}
It thus remains to show that in the right-hand side above we may substitute the random variables $\nabla \phi_{i} , \nabla \phi_{j}$ with 
$\nabla \phi_{\xi,i} , \nabla \phi_{\xi, j}$. To do so, we resort to the construction of $\phi$ obtained in the proof of Theorem \ref{t.main} via the random variable $\chi \in L^2(\Omega, \F, \P)$
 (see \eqref{gradient.corrector}). We also remark that the same holds for $\phi_\xi$, where $\nabla \phi_\xi = \chi_\xi$ with $\chi_\xi \in L^2(\Omega, \F, \P_\xi)$. This either follows directly from the homogenization results for ergodic measures \cite{GNO}[Chapter 6, Section 6.1], or  by the exact same argument used in the proof of Theorem \ref{t.main} for $\phi$.

\medskip

We fix an index $i = 1, \cdots, d$ and drop it in the notation for $\phi_i$. On the one hand, by \eqref{corrector.eq}, for $\P$-almost every $\omega \in \Omega$ we have that
$\phi(\omega, x)$ solves \eqref{corrector.eq}. We use \eqref{ergodic.disintegration} of Lemma \ref{Ergodic.disintegration} to infer that also for $\tilde \P$-almost every 
$\xi \in \Omega_0$ and $\P_\xi$-almost every $\omega \in \Omega$ the functions $\phi( \omega, \cdot)$ satisfy \eqref{corrector.eq}. On the other hand, 
by Lemma \ref{Ergodic.disintegration} for $\tilde\P$-almost every $\xi \in \Omega_0$ the probability measure $\P_\xi$ in $(\Omega, \F)$ is stationary and ergodic with 
respect to the translations $\{ \tau_x \}_{x\in\Rd}$. We thus appeal to the standard results in homogenization \cite{GNO4, GNO, Koslov79}, to infer that there exists a 
random field $\phi_\xi$, having stationary gradient, solving \eqref{corrector.eq} for $\P_\xi$-almost every $\omega \in \Omega$. Therefore, for $\tilde \P$-almost every
$\xi \in \Omega_0$ and $\P_\xi$-almost every $\omega \in \Omega$ we have that the difference $\phi(\omega, \cdot)-\phi_\xi(\omega, \cdot)$ satisfies
\begin{align}
 -\nabla \cdot a(\omega, x)\nabla(\phi(\omega, x) - \phi_\xi( \omega, x) )= 0 \ \ \ \ \ \text{ in $\Rd$.}
\end{align}
This, together with the sub-linearity condition of \eqref{corrector.eq} for both $\phi(\omega, \cdot)$ and $\phi_\xi(\omega, \cdot)$ implies 
\begin{align}\label{equality.Rd}
\nabla\phi(\omega, \cdot) = \nabla \phi_\xi( \omega, \cdot) \ \ \ \text{ in $L^2(\Rd, \Rd)$.}
\end{align}

\smallskip

We now appeal to \eqref{gradient.corrector} for both the gradients $\nabla \phi, \nabla \phi_\xi$ to write 
\begin{align}
 \nabla\phi(\omega, x) = \chi(\tau_x \omega), \ \ \ \nabla\phi_\xi(\omega, x)= \chi_\xi( \tau_x \omega)
\end{align}
for $\chi \in [ L^2(\Omega, \F, \P)]^d$ and $\chi_\xi \in [L^2(\Omega, \F, \P_\xi)]^d$. Note that again by \eqref{ergodic.disintegration}, we have that for $\tilde\P$-almost every $\xi \in \Omega_0$ the random variable $\chi \in [L^2(\Omega, \F, \P_\xi)]^d$. This, the above identities and \eqref{equality.Rd} imply that for all $\rho \in C^\infty_0(\Rd)$ and $\psi \in [L^2(\Omega, \F, \P_\xi)]^d$
\begin{align}
 \int_\Omega \psi(\omega) \cdot ( \int_{\Rd} \rho(x) \chi(\tau_x \omega) \, dx ) \, \P_\xi(d\omega) = \int_\Omega \psi(\omega) \cdot ( \int_{\Rd} \rho(x) \chi_\xi(\tau_x \omega) \, dx ) \, \P_\xi(d\omega).
\end{align}
By stationarity of the measure $\P_\xi$ this may be rewritten as
\begin{align}
\int_\Omega ( \int_{\Rd} \rho(x) \psi(\tau_{-x}\omega) \, dx ) \cdot \chi(\omega) \, \P_\xi(d\omega)
= \int_\Omega ( \int_{\Rd} \rho(x) \psi(\tau_{-x}\omega) \, dx ) \cdot \chi_\xi(\omega) \, \P_\xi(d\omega).
\end{align}
We now choose a sequence  $\phi_\eps = \eps^{-d} \hat\phi( \frac{\cdot}{\eps})$ with $\hat \phi \in C^\infty_0(B_1)$ a mollifier and
appeal to \eqref{stochastic.continuity} to conclude that for every $\psi \in L^2(\Omega, \F, \P_\xi)$
\begin{align}
\int_\Omega \psi(\omega) \cdot \chi(\omega) \, \P_\xi(d\omega) = \int_\Omega \psi(\omega) \cdot \chi_\xi(\omega) \, \P_\xi(d\omega).
\end{align}
In particular, by applying this identity twice, first with $\Psi = a\chi_\xi$ and secondly with $\Psi = a\chi$, we get that the right-hand side of \eqref{Ah.1} equals to the 
right-hand side of \eqref{abstract.reformulation} in Corollary \ref{cor.abstract}.

\end{proof}

\begin{proof}[Proof of Corollary \ref{c.gaussian}]  
Let us split the spectral measure into the two components
\begin{align}
\hat C (\xi) = \hat C_{c}(\xi) + \hat C_{a}(\xi),
\end{align}
with the (positive) measure $C_a$ being the purely atomic part \eqref{atomic.part}. From this decomposition it follows that also $X$ may be decomposed into the two independent processes $X= X_c + X_a$, having spectral measure $\hat C_c$ and $\hat C_a$, respectively.  Moreover, the process $X_c$ is ergodic since $\hat C_c$ does not contain atoms \cite{blum1972, eisenberg1972}.  By \eqref{atomic.part}, the correlation function of the process $X_a$ may be written as
\begin{align}
C(x) = C_0 + \sum_{j=1}^N \alpha_j \cos (\omega_j \cdot x),
\end{align}
which corresponds to the centred stationary Gaussian process
\begin{align}
X_a(x) = x_0+ \sum_{j=1}^{N} \bigl( \zeta_j \cos (\omega_j \cdot x) + \zeta'_j \sin(\omega_j \cdot x) \bigr) = x_0+ \sum_{j=1}^{N} \text{Re} \bigl( (\zeta_j + i\zeta'_j)e^{i\omega_j \cdot x} \bigl),
\end{align}
for the independent random variables $x_0 \sim N(0, \alpha_0)$, $\zeta_j , \zeta_j' \sim N(0, \alpha_j)$ for all $j=1, \cdots, N$. In particular, we remark that 
if we set  $\zeta_j + i \zeta_j = R_j e^{i \phi_j}$, then the above process may be also rewritten as
\begin{align}\label{atomic.process}
X_a(x) = x_0+ \sum_{j=1}^{N} R_j \cos( \omega_j \cdot x + \phi_j)
\end{align}
where  all $R_j$,$\phi_j$ are independent and $R_j \sim Ray(\alpha_j)$, $\phi_j \in U([0, 2\pi))$.

\medskip

 By relying on \eqref{atomic.process} and the decomposition for $X$, we appeal to \cite{SlezakGaussian}[Theorem 5 and Theorem 6] to identify the 
$\sigma$-algebra of the invariant sets $\I$ in terms of the random variables in \eqref{atomic.process}. This, together with Corollary \ref{cor.abstract}, 
concludes the proof.
\end{proof}

\bigskip

\section*{Acknowledgements}

The authors acknowledge support through the CRC 1060 (The Mathematics of Emergent Effects) that is funded through the German Science Foundation (DFG), and the Hausdorff
Center for Mathematics (HCM) at the University of Bonn.

\bibliographystyle{amsplain}

\end{document}